\newtheorem{thm}{Theorem}[section]
\newtheorem{lem}[thm]{Lemma}
\newtheorem{cor}[thm]{Corollary}
\newtheorem{heuristic}[thm]{Heuristic}
\newtheorem{conj}[thm]{Conjecture} 
\theoremstyle{definition}
\newtheorem{definition}[thm]{Definition}
\numberwithin{equation}{section}
\newcommand{\F}{\mathbb{F}}
\newcommand{\N}{\mathbb{N}}  
\newcommand{\ff}[1]{\mathbb{F}_{#1}} 
\newcommand{\set}[2]{\{#1 \: : \: #2\}}
\newcommand{\lv}{\lvert}
\newcommand{\rv}{\rvert}
\title{Estimating the Number Of Roots of Trinomials over Finite Fields}
\author{Zander Kelley$^\star$, Sean W. Owen$^\star$}\thanks{$^\star$Partially 
supported by NSF grants DMS-1156589, DMS-1460766, and CCF-140902.} 
\begin{document}
\maketitle

\begin{abstract}
We show that univariate trinomials $x^n + ax^s + b \in \mathbb{F}_q[x]$ can have at most $\delta \Big\lfloor \frac{1}{2} +\sqrt{\frac{q-1}{\delta}} \Big\rfloor$ distinct roots in $\mathbb{F}_q$, where $\delta = \gcd(n, s, q - 1)$. 
We also derive explicit trinomials having $\sqrt{q}$ roots in $\mathbb{F}_q$ when $q$ is square and $\delta=1$, thus showing that our bound is tight for an infinite family of finite fields and trinomials. 
Furthermore, we present the results of a large-scale computation which suggest that an $O(\delta \log q)$ upper bound may be possible for the special case where $q$ is prime. 
Finally, we give a conjecture (along with some accompanying computational and theoretical support) that, if true, would imply such a bound.
\end{abstract}

\section{Introduction} 
For univariate polynomial equations defined over a field, it is desirable to obtain general upper bounds on the number of solutions given in simple terms of plainly available information, such as the coefficients, exponents, or number of terms. 
The ubiquitous example of this is the degree bound, but over non-algebraically closed fields, it is possible to considerably improve upon the degree bound for certain non-negligible families of polynomials.
Over the real numbers, Descartes' Rule of Signs implies that a $t$-nomial $f$ must have less than $2t$ real roots.
For sparse polynomials - those with a small number of nonzero terms - \linebreak this can provide a remarkable improvement on the trivial upper estimate given by the degree of $f$.

In \cite{canetti}, the authors establish a finite field analogue of Descartes' Rule: a sparsity-dependent upper
bound on the number of roots of a $t$-nomial over $\mathbb{F}_q$.
More recently, an improved upper bound was derived in \cite{sparse_polys}.
Here, we investigate possible further improvements to the bound for the special case of $t = 3$. 
This can be considered the smallest nontrivial choice of $t$, since the zero sets of univariate binomials are easily characterized - they are simply cosets of subgroups of $\F_q^*$, possibly together with $0 \in \F_q$.

\begin{thm} \label{sparse}
\cite[Theorems 2.2 and 2.3]{sparse_polys} 
Let $$f(x) = c_1 x^{a_1} + c_2 x^{a_2} + \cdots + c_t x^{a_t} \in \F_q[x]$$ with all $c_i$ nonzero and $a_1 > a_2 > \cdots > a_t = 0$.
If $f$ vanishes on an entire coset of a subgroup $H \subseteq \F_q^*$, then
$$
 \#H \in \set{k \in \N}{\textnormal{for each } a_i \textnormal{, there is an }a_j \textnormal{ with }  j \neq i  \textnormal{ and }  a_i \equiv a_j \: (\bmod \: k)}.
$$

\noindent
Furthermore, let $R(f)$ denote the number of distinct roots of $f$ in $\F_q$, and suppose $R(f) > 0$.
If $C$ denotes the maximal cardinality of a coset on which $f$ vanishes, then
$$ R(f) \leq 2 (q-1)^{1-1/(t-1)} C^{1/(t-1)}. $$
\end{thm}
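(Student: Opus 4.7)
The plan is to handle the two parts of the theorem separately, using a character-independence argument for the subgroup characterization and an inductive pairing argument for the root-count bound.

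For the subgroup characterization, suppose $f$ vanishes on a coset $\alpha H$ with $H \subseteq \F_q^\star$ of order $k$. Writing $x = \alpha h$ for $h \in H$ and using $h^k = 1$, the condition $f(\alpha h) = 0$ regroups as
\begin{equation*}
\sum_{r = 0}^{k-1} \Bigl( \sum_{\substack{1 \leq i \leq t \\ a_i \equiv r \, (\mathrm{mod}\, k)}} c_i \alpha^{a_i} \Bigr) \, h^r \;=\; 0 \qquad \text{for every } h \in H.
\end{equation*}
Since the maps $h \mapsto h^r$ for $0 \leq r < k$ are distinct characters of the cyclic group $H$ and hence Vandermonde-independent, each inner sum must vanish. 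Because any individual $c_i \alpha^{a_i}$ is nonzero, no residue class $r$ can be occupied by exactly one exponent, and this is the claimed partner condition.

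For the root bound I propose induction on $t$, with the base case $t = 2$ immediate since binomial roots in $\F_q^\star$ form a single coset. The key identity for the inductive step is
\begin{equation*}
f(\rho x) - f(x) \;=\; \sum_{i = 1}^{t - 1} c_i \bigl( \rho^{a_i} - 1 \bigr) \, x^{a_i} \;=:\; f_\rho(x),
\end{equation*}
a polynomial with at most $t - 1$ nonzero terms (the constant cancels since $a_t = 0$). Set $X = \{ x \in \F_q^\star : f(x) = 0 \}$, $R^\star = \#X = R(f)$, and $N_\rho = \#\{ x \in X : \rho x \in X \}$; every such $x$ is a root of $f_\rho$, and counting ordered pairs in $X \times X$ by ratio gives $\sum_{\rho \in \F_q^\star} N_\rho = (R^\star)^2$. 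The subgroup $S = \{ \rho \in \F_q^\star : \rho^{a_i} = 1 \text{ for all } i < t \}$ on which $f_\rho \equiv 0$ stabilizes $X$, so $f$ vanishes on any $S$-coset inside $X$ and hence $\#S \leq C$. Splitting the sum yields
\begin{equation*}
(R^\star)^2 \;\leq\; C \cdot R^\star \;+\; \sum_{\rho \notin S} R(f_\rho),
\end{equation*}
and applying the inductive hypothesis to the $(t - 1)$-nomials $f_\rho$ reduces the problem to solving this inequality.

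The hard part will be tracking the parameter $C$ through the induction: the inductive bound on $R(f_\rho)$ involves the maximal coset $C_\rho$ on which $f_\rho$, rather than $f$, vanishes, and a priori $C_\rho$ need not be controlled by $C$. Overcoming this requires applying part 1 to $f_\rho$ to show that a large $C_\rho$ forces arithmetic coincidences among $a_1, \dots, a_{t-1}$, which then constrain the set of $\rho$ producing sizable $R(f_\rho)$. A power-mean averaging over $\rho \notin S$ combined with this structural restriction should convert the displayed inequality into the desired bound $R(f) \leq 2 (q - 1)^{1 - 1/(t-1)} C^{1/(t-1)}$.
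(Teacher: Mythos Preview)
This theorem is quoted in the paper from \cite{sparse_polys}; the present paper does not give its own proof, so there is no in-paper argument to compare your proposal against. I can only assess the proposal on its merits.

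Your argument for the first assertion is correct and clean: linear independence of the characters $h\mapsto h^r$ on the cyclic group $H$ of order $k$ forces each residue-class coefficient $\sum_{a_i\equiv r}c_i\alpha^{a_i}$ to vanish, and since each term $c_i\alpha^{a_i}$ is nonzero no residue class can contain exactly one exponent.

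For the second assertion, however, you have a genuine gap that you flag but do not close. The recursion
\[
(R^\star)^2 \;\le\; C\,R^\star \;+\; \sum_{\rho\notin S} R(f_\rho)
\]
is valid, but applying the inductive hypothesis to $f_\rho$ brings in the coset parameter $C_\rho$ of $f_\rho$, and $C_\rho$ is \emph{not} controlled by $C$. Already for $t=3$ with exponents $(a_1,a_2,a_3)=(3,1,0)$ over $\F_{11}$ one has $C\mid\gcd(3,1,10)=1$, so $C=1$ whenever $f$ has a root, while the binomial $f_\rho$ vanishes (when it vanishes at all) on a full coset of size $\gcd(a_1-a_2,q-1)=2$, giving $C_\rho=2>C$. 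Worse, even if one grants $C_\rho\le C$ uniformly, the recursion does not close to the stated exponent for $t\ge 4$: substituting $R(f_\rho)\le 2(q-1)^{1-1/(t-2)}C^{1/(t-2)}$ and summing over $q-1$ values of $\rho$ yields $R^\star\lesssim (q-1)^{1-1/(2(t-2))}C^{1/(2(t-2))}$, and $1-\tfrac{1}{2(t-2)}>1-\tfrac{1}{t-1}$ for $t\ge 4$. Your closing sentence about ``power-mean averaging'' and ``structural restriction'' is a hope, not an argument; as written the induction does not produce the claimed bound.
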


\pagebreak

For a trinomial $f(x) = x^n + ax^s + b \in \F_q[x]$, with $a$ and $b$ nonzero, associate the parameter $$\delta = \gcd(n,s,q-1).$$
Suppose that $R(f) > 0$.
It follows from Theorem \ref{sparse} that if $f$ vanishes on a coset of size $C$, then $n \equiv s \equiv 0 \: (\bmod \: C)$. Since $C$ must divide $\#\F_q^*$, we have that $C$ divides $\delta$.
On the other hand, if $f$ vanishes at $\alpha \in \F_q$, then $\alpha \in \F_q^*$, and $f$ vanishes on the entire coset $\set{x \in \F_q^*}{x^\delta = \alpha^\delta}$ of order $\delta$.
So, in the trinomial case we have explicitly that $C = \delta$, and the bound given above simplifies to 
$$R(f) \leq 2 \sqrt{\delta (q-1)}.$$
As pointed out in \cite{rojas2}, 
    this bound for trinomials is also a consequence of an earlier result from \cite{rojas1} which bounds the number of cosets $S_i \subset \F_q^*$ needed to express the zero set of a sparse polynomial as a union of the form $ \bigcup_{i=1}^N S_i$.
Our first result refines this upper bound.

\begin{thm} \label{upper_bound}
The roots of a trinomial
$$f(x) =  x^{n} + ax^{s} + b \in \F_q[x]$$
are the union of no more than $\Big\lfloor \frac{1}{2}+\sqrt{\frac{q-1}{\delta}} \Big\rfloor$ cosets of the subgroup $H \subseteq \F_q^*$ of size $\delta$.
\end{thm}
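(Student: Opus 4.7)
My plan is to reduce the coset-count to an intersection problem between a subgroup of $\F_q^* \times \F_q^*$ and an affine line, and then to bound that intersection by exhibiting an injective map from ordered pairs of intersection points into the subgroup. Writing $n = \delta n'$, $s = \delta s'$, and $m := (q-1)/\delta$, one immediately checks $\gcd(n', s', m) = 1$. For a root $\alpha \in \F_q^*$ of $f$, the element $\beta := \alpha^\delta$ lies in the subgroup $K \subseteq \F_q^*$ of $\delta$-th powers (of order $m$) and satisfies the reduced trinomial equation $\beta^{n'} + a\beta^{s'} + b = 0$. Two roots of $f$ lie in the same coset of $H$ exactly when their $\delta$-th powers coincide, so (using Theorem~\ref{sparse} to know that the zero set of $f$ is a union of $H$-cosets) the coset count $N$ equals the number of $\beta \in K$ satisfying the reduced equation.

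The next step is to consider the homomorphism $\phi \colon K \to \F_q^* \times \F_q^*$ given by $\beta \mapsto (\beta^{n'}, \beta^{s'})$. Because $\gcd(n', s', m) = 1$, the kernel of $\phi$ is trivial, so $G := \phi(K)$ is a subgroup of order $m$; the $N$ solutions correspond bijectively to the $N$ points of $G$ lying on the affine line $L \colon u + av + b = 0$ in $\F_q^2$. The theorem thereby reduces to showing that any such subgroup meets any such line (with $a, b \neq 0$) in at most $\lfloor \tfrac{1}{2} + \sqrt{m}\rfloor$ points.

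The heart of the argument is the following injectivity claim: the map $\Psi(i, j) := \phi(\beta_i) \cdot \phi(\beta_j)^{-1}$ on ordered pairs of distinct solutions is injective, landing in $G \setminus \{(1,1)\}$. Writing $\Psi(i, j) = (r_1, r_2)$ and combining the two line equations $\phi(\beta_i), \phi(\beta_j) \in L$ with the component-wise relation $\phi(\beta_i) = (r_1, r_2) \cdot \phi(\beta_j)$, one solves explicitly for $\phi(\beta_j)$ as a rational function of $(r_1, r_2)$ — specifically, $\beta_j^{s'} = b(1 - r_1)/(a(r_1 - r_2))$, and then $\beta_j^{n'} = -a\beta_j^{s'} - b$. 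This recovers both $\phi(\beta_j)$ and $\phi(\beta_i) = (r_1, r_2) \cdot \phi(\beta_j)$ from the ratio. Given the injectivity, $N(N-1) \leq |G| - 1 = m - 1$, i.e.\ $N^2 - N + 1 \leq m$, which yields $N < \tfrac{1}{2} + \sqrt{m}$; since $\tfrac{1}{2} + \sqrt{m}$ is never an integer for a positive integer $m$, the floor bound follows.

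The only delicate point is checking that the explicit inversion is non-degenerate: one needs $r_1 \neq 1$ (otherwise $\phi(\beta_i) = \phi(\beta_j)$, contradicting injectivity of $\phi$ together with $\beta_i \neq \beta_j$) and $r_1 \neq r_2$ (otherwise the derived relation forces $b(1 - r_1) = 0$, hence $b = 0$ or $r_1 = 1$, both excluded). So the main obstacle is really just spotting the right injection; once that is in place, everything — the reduction via $\phi$, the algebraic inversion, and the resulting quadratic inequality — is routine, and no tools beyond elementary group theory and linear algebra are needed.
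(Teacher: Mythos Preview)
Your argument is correct and lands on exactly the same inequality $N(N-1)+1 \le m$ as the paper, via what is at bottom the same mechanism: the quotients of distinct solutions are pairwise distinct nontrivial elements of a group of order $m$. The packaging is different, though. The paper stays in one variable: given roots $\zeta_1,\dots,\zeta_r$ of the reduced trinomial, it forms the shifts $g_i(x)=f(\zeta_i x)$, notes that each $g_i$ lies in the family $C(n,s)=\{cx^n-(c+1)x^s+1\}$, and proves (Lemma~2.2) that distinct members of $C(n,s)$ share only the root $1$, because $f_c(\alpha)=0$ is a \emph{linear} equation in the parameter $c$. The roots of $g_i$ are exactly the quotients $\zeta_j/\zeta_i$, so this yields $r(r-1)+1$ distinct elements of $K$. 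You instead embed $K$ into $(\F_q^*)^2$ via $\phi(\beta)=(\beta^{n'},\beta^{s'})$, turning the problem into ``subgroup of order $m$ meets an affine line $u+av+b=0$'', and recover the quotient from a $2\times 2$ linear system. The two are equivalent---your ratio $(r_1,r_2)=\phi(\beta_i)\phi(\beta_j)^{-1}$ is $\phi$ applied to the paper's $\zeta_i/\zeta_j$, and your linear inversion is the two-variable form of the paper's ``linear in $c$'' step---but your incidence-geometric framing has the advantage of making the bound recognizable as a special case of a subgroup-versus-line incidence problem.

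One small gloss worth tightening: when you write ``$r_1\ne 1$ (otherwise $\phi(\beta_i)=\phi(\beta_j)$)'', note that $r_1=1$ by itself only gives $\beta_i^{n'}=\beta_j^{n'}$; you also need to subtract the two line equations (using $a\ne 0$) to force $\beta_i^{s'}=\beta_j^{s'}$ and hence $\phi(\beta_i)=\phi(\beta_j)$.
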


Consequently, we now have $R(f) \leq \delta \Big\lfloor \frac{1}{2} +\sqrt{\frac{q-1}{\delta}} \Big\rfloor$, improving the previous result by approximately a factor of $2$ when $\delta \ll q$. 
The method of proof is elementary but interesting: given a trinomial with $\delta = 1$ and $r$ roots in a field of undetermined size, we construct $r^2 - r + 1$ distinct nonzero elements in the field, giving a lower bound on its size.

Additionally, we show that when $\delta=1$, this new bound is optimal for even-degree extensions of $\F_p$.
If $q$ is an even power of a prime $p$ and $\delta=1$, the bound reduces to $R(f) \leq \sqrt{q}$, and we can indeed construct trinomials with $\delta=1$ and $\sqrt{q}$ distinct roots in $\F_q$.   

\begin{thm} \label{june_trinomials}
For any odd prime $p$, the trinomial $x^{p^k} + x - 2$ 
has exactly $p^k$ roots in $\F_{p^{2k}}$.
\end{thm}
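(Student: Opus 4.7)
The plan is to identify $x^{p^k} + x$ with a trace map and then exploit the linear-algebraic properties of the trace. Over the extension $\F_{p^{2k}}/\F_{p^k}$, the Frobenius $\phi(x) = x^{p^k}$ generates the (order-two) Galois group, so the relative trace is $\operatorname{Tr}_{\F_{p^{2k}}/\F_{p^k}}(x) = x + x^{p^k}$. Consequently, the roots of $f(x) = x^{p^k} + x - 2$ in $\F_{p^{2k}}$ are precisely the solutions of $\operatorname{Tr}(x) = 2$.

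The next step is to count this fiber. Since $p$ is odd, $2$ is a nonzero element of $\F_p \subseteq \F_{p^k}$, so $2$ lies in the codomain of the trace. The trace is an $\F_{p^k}$-linear map from the two-dimensional $\F_{p^k}$-vector space $\F_{p^{2k}}$ onto the one-dimensional $\F_{p^k}$ (nonzero, hence surjective), so $\ker \operatorname{Tr}$ has size $p^k$, and every nonempty fiber is a coset of the kernel. In particular, $\operatorname{Tr}^{-1}(2)$ contains exactly $p^k$ elements, each of which is a root of $f$ in $\F_{p^{2k}}$.

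Finally, since $\deg f = p^k$, these $p^k$ elements exhaust the roots of $f$ in its algebraic closure, so they are automatically distinct and lie entirely in $\F_{p^{2k}}$. The main conceptual step is recognizing the trace structure of $x^{p^k} + x$; once that is in hand, the counting is a one-line consequence of linear algebra over $\F_{p^k}$, and there is no serious obstacle remaining. (As a sanity check, when $\delta = \gcd(p^k, 1, p^{2k}-1) = 1$, Theorem \ref{upper_bound} gives the upper bound $\lfloor \tfrac{1}{2} + \sqrt{p^{2k} - 1}\rfloor = p^k$ for $p^k \geq 2$, so the construction saturates the general bound.)
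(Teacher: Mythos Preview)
Your proof is correct and follows essentially the same linear-algebraic approach as the paper: both recognize that $x\mapsto x^{p^k}+x$ is an $\F_{p^k}$-linear map on $\F_{p^{2k}}$ with kernel of size $p^k$, and then count the fiber over $2$. Your explicit identification of this map with the relative trace $\operatorname{Tr}_{\F_{p^{2k}}/\F_{p^k}}$ is a nice touch that the paper leaves implicit; it makes surjectivity onto $\F_{p^k}$ immediate, whereas the paper instead notes $T(1)=2$ to see that the fiber is nonempty.
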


We prove Theorem \ref{june_trinomials} via linear-algebraic techniques: the extremal examples provided are translations of linear maps with null-spaces of exactly half the dimension of $\ff{q}$ as a vector space over $\ff{\sqrt{q}}$.
The optimality of the bound is somewhat murkier when $\F_q$ is not an even-degree extension. 
Trinomials with nearly as many roots have been found for some other cases; for example, when $q$ is a cube, the authors of \cite{rojas2} give the example $f(x) = x^{1+q^{1/3}} + x + 1$ which has $q^{1/3} + 1$ roots.

Most notably, the question of optimality of the bound remains open for the prime field case.
We remark that out of all examples of which we are aware, including those given in \cite{rojas2}, the only sparse polynomials which vanish at a substantial number of points do so by abusing some obvious algebraic structure of $\F_q$ - they either vanish on an entire translation of a subspace or on an entire coset of a nontrivial subgroup.
Trinomials over prime fields which have $\delta = 1$ are deprived of both of these luxuries, and accordingly, finding examples with many roots seems to be difficult.

Let $R_p$ denote the maximum value of $R(f)$ over all trinomials in $\F_p[x]$ having $\delta = 1$. 
Recall that by Fermat's little theorem, if $\tilde{n} \equiv n \: (\bmod \: p - 1)$ and $\tilde{s}  \equiv s \: (\bmod \: p - 1)$, then the two polynomials $f(x) = x^n + ax^s + b$ and $\tilde{f}(x) = x^{\tilde{n}} + ax^{\tilde{s}} + b$ define the same mapping on $\F_p^*$, so it is possible to compute $R_p$ in straightforward way by enumerating trinomials with degree less than $p - 1$ and counting their roots.
In \cite{rojas2}, $R_p$ is computed for all primes up to 16633, and they find no instances in which $R_p$ exceeds $2 \log p$.
As a result of a large-scale computation, we observe that the inequality $R_p \leq 2 \log p$ continues to hold for all primes up to 139571.
Therefore, it appears that the current bound, $R_p = O(\sqrt{p})$, is still far from optimal for trinomials over $\F_p$, but we have been unsuccessful in proving any stronger version of Theorem \ref{upper_bound} for prime fields.

It is known that if $f$ is allowed to range over all polynomials in $\F_p[x]$, the distribution of $R(f)$ approaches a Poisson distribution with mean $1$ as $p \to \infty$  \cite{random}.
That is, the proportion of $f \in \F_p[x]$ with $R(f) = r$ is approximately  $e^{-1}/r!$ when $p$ is sufficiently large. 
Based on computational experiments, this also appears to be true when $f$ ranges over just the set of trinomials in $\F_p[x]$ with $\delta=1$. 
This is certainly \textit{not} the case if $f$ were to range over, for example, the set of \textit{all} trinomials, or the set of {\em tetranomials} with $\delta=1$ due to the presence of $f$ which vanish on large cosets. 
On the other hand, the set of trinomials with $\delta=1$ appears to behave similarly to what we would expect from an $f$ randomly selected from all of $\F_p[x]$. Apparently, restriction of $f \in \F_p[x]$ to trinomial with $\delta = 1$ provides very little statistical information about $R(f)$.

\begin{heuristic} \label{behave}
With respect to root number, the set of trinomials $f \in \F_p[x]$ with $\delta=1$ behaves like a uniform random sample of polynomials from $\F_p[x]$. 
That is, when $p$ is large enough, the values of $R(f)$ behave like they are given by random variables with distribution function $\rho(r) = e^{-1}/r!$ (a Poisson distribution with mean 1).
\end{heuristic}

In Section 4, we show that Heuristic \ref{behave} allows us to make fairly accurate guesses of the actual values of $R_p$ recorded by our computations.
Therefore, it may be that the observed logarithmic growth of $R_p$ is not due to any special property of trinomials with $\delta = 1$, but rather emerges as a statistical consequence of this set being so ``ordinary," together with the exponential decay of the Poisson distribution.
We phrase this formally as the following conjecture that the distributions of $R(F)$ and $R(f)$, with $F$ ranging over $\F_p[x]$ and $f$ ranging over trinomials with $\delta = 1$, differ by at most a constant factor.

\begin{conj} \label{con}
Define
\begin{align*}
M_p &= \set{f \in \F_p[x]}{ \deg f < p},  \\
T_p &= \set{x^n + ax^s + b}{ (a,b) \in (\F_p^*)^2, \; 0 < s < n < p - 1, \; \textnormal{and }\gcd(n,s,p-1) = 1}.
\end{align*}
Let $\mu(p,r)$ denote the proportion of $f \in M_p$ with $R(f) = r$ and $t(p,r)$ denote the proportion of $f \in T_p$ with $R(f) = r$.
There exists a constant $\lambda \in \mathbb{R}$ such that

$$ t(p,r) \leq \lambda  \mu(p,r) ,$$
for all $p$ prime and $r \in \N$.

\end{conj}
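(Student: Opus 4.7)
The plan is to attack Conjecture \ref{con} by the method of factorial moments. Since the root count of a uniformly random polynomial of degree less than $p$ is exactly $\mathrm{Binomial}(p, 1/p)$, the lower bound $\mu(p, r) \ge c/r!$ holds with an absolute $c > 0$ for all $r \le \sqrt{p - 1}$; and for $r$ beyond this range, Theorem \ref{upper_bound} (with $\delta = 1$) already forces $t(p, r) = 0$. It therefore suffices to establish a matching absolute upper bound $t(p, r) \le C/r!$ in the range $r \le \sqrt{p - 1}$. Since $t(p, r) \le E_r(p)$ where
$$
E_r(p) := \frac{1}{|T_p|} \sum_{f \in T_p} \binom{R(f)}{r} = \frac{1}{|T_p|} \sum_{\substack{S \subset \F_p^* \\ |S| = r}} \#\{f \in T_p : f|_S \equiv 0\}
$$
is the $r$-th factorial moment of $R(f)$ over $T_p$, and since the analogous factorial moment under Poisson$(1)$ is exactly $1/r!$, the goal becomes the uniform estimate $E_r(p) = O(1/r!)$.

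The central reformulation is geometric: a trinomial $x^n + ax^s + b$ vanishes on $\{x_1, \ldots, x_r\} \subset \F_p^*$ if and only if the points $P_i = (x_i^s, x_i^n) \in \F_p^2$ are collinear on a line $v = -au - b$ of nonzero intercept. Hence for each fixed $(n, s)$ there is at most one $(a, b)$ compatible with $S$ once $|S| \ge 2$, and for $|S| \ge 3$ the existence of such $(a, b)$ imposes genuine collinearity conditions on the $P_i$. Estimating $E_r(p)$ thereby reduces to bounding, for each $r$-subset $S$, the number of exponent pairs $(n, s) \in [1, p-1]^2$ under which the images $(x_i^s, x_i^n)_{i = 1}^r$ are collinear, and summing this count over $S$ against $|T_p| = \Theta(p^4)$.

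The cases $r = 0, 1, 2$ are routine: elementary linear algebra yields $E_r(p) = O(1/r!)$. The main obstacle — and the crux of any proof — is $r \ge 3$. Writing $x_i = g^{y_i}$ for a fixed generator $g$ of $\F_p^*$ and $(u, v) = (g^s, g^n)$, the collinearity condition on any triple of indices translates to a polynomial equation
$$
(v^{y_j} - v^{y_i})(u^{y_k} - u^{y_i}) \;=\; (v^{y_k} - v^{y_i})(u^{y_j} - u^{y_i})
$$
in $(u, v) \in (\F_p^*)^2$, and one must bound the total number of corresponding $(s, n) \in [1, p-1]^2$ arising via discrete logarithm. Direct B\'ezout-type bounds scale with $\max_i y_i$, which can be as large as $p - 2$, and are hopeless. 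I would instead attempt these counts through (i) Weil-type character-sum bounds, recasting root counting via $\mathbf{1}[f(x) = 0] = p^{-1} \sum_\chi \chi(f(x))$ and exploiting the extensive literature on exponential sums over sparse polynomials, or (ii) incidence geometry in $\F_p^2$, controlling the incidences between the rational curve $t \mapsto (t^s, t^n)$ and a pencil of lines via sum-product-type inequalities à la Bourgain-Katz-Tao. The key difficulty, and the likely reason this conjecture has resisted proof despite strong computational evidence, is that such bounds must hold \emph{uniformly in the exponents} $(n, s)$ rather than degrading with the degree of the underlying curve; securing this on average over $S$ would close the factorial moment argument and yield the conjecture.
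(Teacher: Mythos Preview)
This statement is presented in the paper as an open \emph{conjecture}, not a theorem; there is no proof in the paper to compare against. The authors offer computational evidence (Section~4) and a partial result restricted to trinomials of bounded degree (Theorem~\ref{poisson_density}), but explicitly leave Conjecture~\ref{con} unresolved.

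Your proposal is likewise not a proof but a research strategy, and you yourself identify the gap. The preliminary reductions are sound: the observation that $R(f)$ over $M_p$ is exactly $\mathrm{Binomial}(p,1/p)$ is correct (evaluation at the $p$ points of $\F_p$ is a linear bijection $M_p \to \F_p^{\,p}$, so the indicators $\mathbf{1}[f(\alpha)=0]$ are i.i.d.\ Bernoulli$(1/p)$), and this does give a uniform lower bound $\mu(p,r)\ge c/r!$ in the range $r\le\sqrt{p-1}$; combined with Theorem~\ref{upper_bound} this cleanly reduces the conjecture to the factorial-moment estimate $E_r(p)=O(1/r!)$. The geometric rephrasing in terms of collinearity of the points $(x_i^s,x_i^n)$ is also correct and natural. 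But the decisive step --- bounding, uniformly in $(n,s)$ and on average over $r$-subsets $S$, the number of exponent pairs making the image points collinear --- is exactly where you stop. Neither of the suggested tools (Weil bounds, sum--product incidence estimates) is known to give the required uniformity: standard character-sum bounds for $\sum_x e_p(x^n+ax^s)$ degrade with $\max(n,s)$, and Szemer\'edi--Trotter-type incidence bounds over $\F_p$ control incidences with \emph{lines}, whereas here the roles are reversed and one needs to control incidences of a high-degree rational curve with many lines. So the proposal isolates the right obstruction but does not overcome it; what you have written is a fair summary of why the conjecture is open, not a proof of it.
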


If these two distributions do in fact differ by at most a constant factor, then we are able to readily derive the logarithmic upper bound for $R_p$ suggested by our experiments. 
And, in turn, we could extend such a bound to trinomials with $\delta > 1$ by noticing that $x^n + ax^s + b$ has at most $\delta$ roots for every root of $x^{n / \delta} + ax^{s / \delta} + b$.

\begin{cor} \label{cor}
Suppose Conjecture \ref{con} is true. Then, we have the asymptotic bound
$$ R_p = \max \set{R(f)}{f \in T_p} = O \left( \frac{\log p}{\log \log p} \right). $$
\end{cor}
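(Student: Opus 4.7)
The plan is to combine Conjecture \ref{con} with a \emph{uniform} (non-asymptotic) upper bound on $\mu(p,r)$, and then extract the growth estimate on $R_p$ from a counting argument on $|T_p|$ together with Stirling's formula. Heuristic \ref{behave} only asserts Poisson convergence as $p \to \infty$, so one cannot invoke the limiting mass $e^{-1}/r!$ directly; what is needed is a bound on $\mu(p,r)$ valid for every prime $p$, and the right candidate turns out to be the elementary estimate $\mu(p,r) \leq 1/r!$.

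To establish this uniform bound, I would count $f \in M_p$ with at least $r$ distinct roots by first choosing a set of $r$ elements of $\F_p$ and then choosing a cofactor of degree strictly less than $p - r$, which yields at most $\binom{p}{r} p^{p-r}$ such polynomials. Since this count overcounts (a polynomial with $k \geq r$ roots is counted $\binom{k}{r}$ times), it certainly also bounds the number of $f$ with exactly $r$ roots. Dividing by $|M_p| = p^p$ and using $\binom{p}{r} \leq p^r/r!$ gives $\mu(p,r) \leq 1/r!$.

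Invoking Conjecture \ref{con} then yields $t(p,r) \leq \lambda/r!$, so the number of $f \in T_p$ with $R(f) = r$ is at most $\lambda |T_p|/r!$. Counting parameters, $|T_p| = O(p^4)$, since $a,b \in \F_p^*$ and $(s,n)$ ranges over fewer than $p^2$ choices. If $R_p = r$, then at least one trinomial in $T_p$ achieves root count $r$, so $\lambda |T_p|/r! \geq 1$, and hence $r! = O(p^4)$. Applying Stirling's formula, $\log(r!) = r\log r - r + O(\log r)$, yields $r \log r = O(\log p)$, and therefore $R_p = O(\log p/\log\log p)$.

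The main obstacle is conceptual rather than technical: one must notice that the conjecture, combined with the super-exponential decay of $1/r!$ against the merely polynomial growth of $|T_p|$, immediately yields a $\log p/\log\log p$ bound through Stirling. The specific exponent in $|T_p| = O(p^4)$ is unimportant — any bound of the form $|T_p| = O(p^k)$ for fixed $k$ leads to the same asymptotic, since only the qualitative gap between super-exponential tail decay and polynomial growth of $|T_p|$ is being exploited.
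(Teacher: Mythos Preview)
Your proposal is correct and follows essentially the same route as the paper's own proof: both establish the uniform bound $\mu(p,r) \leq 1/r!$ by counting polynomials with $r$ prescribed linear factors times an arbitrary cofactor, combine this with Conjecture~\ref{con} and the crude estimate $\#T_p \leq p^4$ to force $R_p! = O(p^4)$, and then extract $R_p = O(\log p/\log\log p)$ via Stirling. Your additional remark that any polynomial bound $\#T_p = O(p^k)$ would suffice is also in the spirit of the paper's argument.
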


\begin{proof}

Let $M_p(r) = \set{f \in M_p}{R(f) = r}$ and $T_p(r) = \set{f \in T_p}{R(f) = r}$ so that 
$$\mu(p,r) = \frac{\#M_p(r)}{\#M_p} \textnormal{ and } t(p,r) = \frac{\#T_p(r)}{\#T_p}.$$
\pagebreak

\noindent
We can bound $\#M_p(r)$ from above by counting polynomials of the form
$$ \left( \prod_{i=1}^r (x - \alpha_i) \right) \left( \sum_{i=0}^{p-1-r} c_i x^i \right), $$
with $\alpha_i \in \F_p$ distinct, which gives
$$ \mu(p,r) = \frac{\#M_p(r)}{\#M_p} =  \frac{\#M_p(r)}{p^{p}} \leq \frac{\binom{p}{r} p^{p-r}}{p^{p}} =  \binom{p}{r} \frac{1}{p^r}  \leq \frac{1}{r!}.$$
Obviously $\#T_p \leq p^4$, so assuming the existence of $\lambda$ defined in Conjecture \ref{con}, we have
$$ \#T_p(r) \leq \lambda  \mu(r,p) \#T_p \leq \frac{\lambda \#T_p}{r!} \leq \frac{\lambda p^4}{r!} .$$
If $\lambda p^4 / r! < 1$ then the set $T_p(r)$ is empty, so we must have $\lambda p^4 / R_p! \geq 1$, or equivalently, 
$ \log (R_p!) \leq \log (\lambda p^4 ).$ By applying Stirling's approximation, we get the asymptotic bound
$$ R_p \log R_p \sim \log (R_p!) \leq \log (\lambda p^4 ) = 4 \log p + \log \lambda = O(\log p). $$
By considering the growth order of the inverse function of $y = x \log x$, we obtain $$R_p = O \left( \frac{\log p}{\log \log p} \right).$$

\end{proof}

We remark that in \cite{rojas2}, it is shown that under the Generalized Riemann Hypothesis, there exists an infinite sequence of primes 
$\left( p_k \right)_{k=1}^\infty$ satisfying the lower bound $R_{p_k} = \Omega \left( \frac{\log {p_k}}{\log \log {p_k}} \right)$. So, the truth of both Conjecture \ref{con} and GRH would imply that the bound
 in Corollary \ref{cor} is, up to a multiplicative constant, asymptotically optimal.

Finally, we prove the following theoretical result which states that Conjecture \ref{con} is true if we consider only trinomials of bounded degree as we take $p$ to infinity.
This weaker result is suggestive but certainly not sufficient to imply the bound in Corollary \ref{cor}.
In particular, Theorem \ref{poisson_density} shows the existence of $\lambda_N \in \mathbb{R}$ such that $t_N(p,r) \leq \lambda_N \mu(p,r)$, but we do not have a bound on the set $\set{\lambda_N}{N \in \N}$.

\begin{thm} \label{poisson_density} 
Suppose $n,s \in \N$ with $0 < s < n$ and $\gcd(n,s) = 1$.
As $p \to \infty$, the proportion of pairs $(a,b) \in (\F_p^*)^2$, such that $f(x) = x^n + ax^s + b$ has $R(f) = r$, converges to
$$
\begin{dcases}
\frac{ \big[e^{-1} (n-r)! \big] }{\hfill r! (n-r)! \ } & \mbox{if } r < n \\
\: 1/r! &\mbox{if } r = n,
\end{dcases}
$$
where $[\cdot]$ denotes the ``nearest integer" function.\\

\noindent
Furthermore, fix $N \in \N$, and let
$$T_{p,N} = \set{x^n + ax^s + b}{(a,b) \in (\F_p^*)^2, \; 0 < s < n \leq N, \; \textnormal{and } \gcd(n,s,p-1) = 1}.$$ 
Let $\mu(p, r)$ be defined  as in Conjecture \ref{con}, 
and let $t_N(p,r)$ denote the proportion of $f \in T_{p,N}$ with $R(f) = r$.
We then have

$$ \limsup_{p \to \infty} \left( \max_{r \leq N} \frac{ t_N(p,r)}{\mu(p,r)}   \right) \leq e .$$

\end{thm}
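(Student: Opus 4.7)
The plan is to reduce both parts to a single core computation: the limiting distribution of $R(f_{a,b})$ for uniformly random $(a,b) \in (\F_p^*)^2$ when the exponents $(n,s)$ are fixed and coprime. The first assertion is exactly that limiting distribution, and the $\limsup$ bound in the second part follows by averaging across the exponent pairs that index $T_{p,N}$.

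For the first part I would use M\"obius inversion on binomial moments. Set $\tilde N(r) := \#\{(a,b) \in (\F_p^*)^2 : R(f_{a,b}) = r\}$ and $P_k := \sum_{(a,b) \in (\F_p^*)^2} \binom{R(f_{a,b})}{k}$. Standard inclusion-exclusion gives $\tilde N(r) = \sum_{k \geq r} (-1)^{k-r} \binom{k}{r} P_k$, so the first part reduces to the claim that $P_k/(p-1)^2 \to 1/k!$ for $0 \leq k \leq n$ (and $P_k = 0$ for $k > n$). Granting this, the alternating sum collapses via the derangement identity $\sum_{j=0}^{m}(-1)^j/j! = D_m/m!$ to $D_{n-r}/(r!(n-r)!)$, which matches the stated nearest-integer formula for $r < n$ (since $D_m = [m!/e]$ for $m \geq 1$) and yields $1/r!$ at the boundary $r = n$ (since $D_0 = 1$). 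To verify the $P_k$ limit, swap summation to write $P_k = \sum_{|S|=k} \#\{(a,b) \in (\F_p^*)^2 : f_{a,b}(\alpha) = 0 \text{ for all } \alpha \in S\}$; for each $k$-set $S = \{\alpha_1, \ldots, \alpha_k\}$, the inner count is the intersection of the lines $\ell_{\alpha_i} : \alpha_i^s a + b + \alpha_i^n = 0$ with $(\F_p^*)^2$. The cases $k \leq 1$ are immediate; for $k = 2$ the lines fail to meet transversally only when $\alpha_1^s = \alpha_2^s$, which together with the constraints $a, b \neq 0$ removes only $O(p)$ of the $\binom{p-1}{2}$ pairs. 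For $3 \leq k \leq n$ the system is overdetermined, and one instead uses the dual identity $k! P_k = \sum_{(a,b)} k!\binom{R(f_{a,b})}{k}$ and an inductive ``fix two roots, bound the extensions'' argument that leverages the degree bound on $f$ to absorb the anomalous concurrent configurations into an $O(p)$ error.

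For the second part, partition $T_{p,N} = \bigsqcup_{(n,s)} T^{n,s}_p$ over the pairs with $0 < s < n \leq N$ and $\gcd(n,s,p-1) = 1$, each block of size $(p-1)^2$. For a pair with $d := \gcd(n,s) > 1$, the hypothesis $\gcd(n,s,p-1) = 1$ forces $\gcd(d,p-1) = 1$, so $x \mapsto x^d$ is a bijection on $\F_p^*$ and the substitution $y = x^d$ identifies the root set of $x^n + ax^s + b$ with that of the coprime trinomial $y^{n/d} + ay^{s/d} + b$; hence both distributions coincide in the limit. Applying the first part to the reduced pair gives a limiting proportion $\pi(r) \leq 1/r!$ for every $r$, the bound following from $D_m \leq m!$ (sharp only when $r$ equals the reduced degree). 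Because the index set of pairs has size bounded by $\binom{N}{2}$ independent of $p$, $t_N(p,r)$ is a finite weighted average and $\limsup_p$ commutes with the max over pairs, yielding $\limsup_p t_N(p,r) \leq 1/r!$. Combining with the classical Poisson-$1$ limit $\mu(p,r) \to e^{-1}/r!$ from \cite{random} gives the ratio bound $e$ for each $r \leq N$, and since the outer $\max_{r \leq N}$ also ranges over a finite set the bound $e$ is preserved.

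The main obstacle is the binomial-moment computation for $3 \leq k \leq n$: the easy linear-algebra argument that handles $k \leq 2$ has to be upgraded to rule out ``unexpectedly concurrent'' configurations of the lines $\ell_{\alpha_1}, \ldots, \ell_{\alpha_k}$. A conceptually attractive alternative route is Galois-theoretic: show that for uniformly random $(a,b)$ the Frobenius action on the $n$ roots of $f$ in $\bar{\F}_p$ is asymptotically uniform on $S_n$, so that $R(f)$ (the count of Frobenius fixed points) has the limiting distribution of fixed points of a uniformly random permutation of $n$ letters, which is precisely $\binom{n}{r} D_{n-r}/n! = D_{n-r}/(r!(n-r)!)$. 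This would simultaneously produce the formula and explain why derangements appear.
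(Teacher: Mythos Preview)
Your treatment of the second assertion --- the reduction of the $\gcd(n,s)>1$ case to the coprime case via the bijection $x\mapsto x^d$, the averaging over the $O(N^2)$ exponent pairs, the bound $D_m/m!\le 1$, and the appeal to Leont'ev's Poisson limit for $\mu(p,r)$ --- is correct and matches the paper's argument essentially line for line.

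The gap is in your primary approach to the first assertion. The binomial-moment claim $P_k/(p-1)^2 \to 1/k!$ is easy for $k\le 2$, as you say, but for $k\ge 3$ your ``fix two roots, bound the extensions'' sketch delivers only an \emph{upper} bound: once $(\alpha_1,\alpha_2)$ determine $(a,b)$, the degree bound says there are at most $n-2$ further roots, hence $k!\,P_k \le (n-2)(n-3)\cdots(n-k+1)\cdot 2!\,P_2$, but it says nothing about the \emph{average} number of further roots. The quantity $P_k$ for $k\ge 3$ is genuinely arithmetic --- it is the $\F_p$-point count on the locus of $k$-tuples $(\alpha_1,\ldots,\alpha_k)$ for which the lines $\ell_{\alpha_i}$ are concurrent, with the diagonals removed --- and pinning down its leading constant (not just its order of magnitude) requires geometric input you have not supplied. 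So as written the moment route is incomplete, and the phrase ``absorb the anomalous concurrent configurations into an $O(p)$ error'' has no content: there is no identified main term from which the anomalies are deviations.

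The ``conceptually attractive alternative'' you float at the end is exactly what the paper does. The two inputs you are missing are: (i) Cohen's theorem that $\textnormal{Gal}\big(x^n+Ax^s+B,\ \overline{\F}_p(A,B)\big)\cong S_n$ whenever $\gcd(n,s)=1$ and $p\nmid n(n-1)$; and (ii) a function-field Chebotarev density theorem (driven by the Lang--Weil estimate) which converts (i) into the statement that for each partition $\lambda$ of $n$ the proportion of $(a,b)\in\F_p^2$ with factorization pattern $\lambda$ is $\#C_\lambda/n! + O_N(p^{-1/2})$. Summing over $\lambda$ with exactly $r$ parts equal to $1$ gives the fixed-point distribution $\binom{n}{r}D_{n-r}/n!$ you wrote down, and your derangement bookkeeping from there is correct. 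In short: your reading of the answer and your handling of the second half are right, but the viable route to the first half is the Galois-theoretic one you mention only in passing, not the moment computation.
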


\section{New Upper Bound and Extremal Trinomials}

\begin{definition}
For $n, s$ fixed, define the family of trinomials in $\ff{q}[x]$ 
$$C(n, s) = \set{ f_c(x) = cx^{n} - (c+1)x^{s} + 1}{c\neq -1,0}. $$
\end{definition}

Observe that $C(n, s)$ is exactly the set of trinomials with
\begin{itemize}
\item support $\{ n, s, 0 \}$ 
\item constant term $1$
\item $f(1) = 0$ 
\end{itemize}
This is clear because $f(1) = 0$ if and only if $f$'s coefficients sum to zero.
We introduce this family of trinomials because they have the following useful property.

\begin{lem} \label{disjoint_roots}
Let $G \subseteq \F_q^*$ be the unique multiplicative subgroup of order $N$, and suppose that $\gcd(n, s, N) = 1$. 
The only root in $G$ shared by any two members of $C(n, s)$ is $\alpha = 1$.
\end{lem}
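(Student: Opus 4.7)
The plan is to compare two members $f_c, f_{c'} \in C(n,s)$ with $c \neq c'$ at a hypothetical common root $\alpha \in G$ and extract two multiplicative constraints on $\alpha$ that, together with $\alpha \in G$, force the order of $\alpha$ to divide $\gcd(n,s,N) = 1$.

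More concretely, I would suppose $\alpha \in G$ satisfies $f_c(\alpha) = f_{c'}(\alpha) = 0$, so that
\begin{align*}
c \alpha^n - (c+1)\alpha^s + 1 &= 0, \\
c' \alpha^n - (c'+1)\alpha^s + 1 &= 0.
\end{align*}
Subtracting the two equations gives $(c - c')(\alpha^n - \alpha^s) = 0$. Since $c \neq c'$, we obtain $\alpha^n = \alpha^s$. Substituting $\alpha^n = \alpha^s$ back into the first equation collapses it to $1 - \alpha^s = 0$, hence $\alpha^s = 1$ and consequently $\alpha^n = 1$ as well.

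Thus the multiplicative order of $\alpha$ divides both $n$ and $s$, and since $\alpha \in G$ its order also divides $N$. So the order of $\alpha$ divides $\gcd(n,s,N) = 1$, which forces $\alpha = 1$. Conversely $1$ is a root of every $f_c \in C(n,s)$ by construction, so $\alpha = 1$ is indeed shared.

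I do not anticipate a genuine obstacle here: the statement is essentially a two-line linear-algebra manipulation (eliminate the quadratic-in-exponent terms by subtraction) followed by reading off the multiplicative constraints, and the coprimality hypothesis $\gcd(n,s,N)=1$ is exactly what is needed to conclude. The only mild care required is to note that $\alpha \neq 0$ (automatic since $\alpha \in G \subseteq \F_q^*$) so that dividing by powers of $\alpha$ is legitimate.
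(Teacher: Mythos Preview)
Your proof is correct and follows essentially the same approach as the paper: the paper phrases the key step as observing that $f_c(\alpha)=0$ is a linear equation in $c$, which has more than one solution only when $\alpha^n-\alpha^s=0$ and $\alpha^s-1=0$, while your subtraction of the two equations is exactly the concrete execution of this observation. The concluding order-divides-$\gcd(n,s,N)$ argument is identical.
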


\begin{proof}
$f_c(\alpha) = 0$ is equivalent to the following linear equation in $c$: 
$$c(\alpha^{n} - \alpha^{s}) = \alpha^{s} - 1.$$ 
This has multiple solutions in $c$ if and only if both $\alpha^{n} - \alpha^{s} = 0$ and $\alpha^{s} - 1 = 0$.
Since $G$ is a cyclic group of order $N$ and $\gcd(n, s, N) = 1$, the only $\alpha \in G$ such that $\alpha^{n} = \alpha^{s} = 1$ is $\alpha = 1$ itself. So 1 is the only $\alpha$ such that $f_c(\alpha) = 0$ for multiple $f_c \in C(n,s)$.
\end{proof}

\begin{lem} \label{gcd_one}
Let $G \subseteq \F_q^*$  be the unique multiplicative subgroup of order $N$, and let $f \in \ff{q}[x]$ be a trinomial of the form $ax^{n} + bx^{s} + 1$ satisfying $\gcd(n, s, N) = 1$. 
The number of roots of $f$ that lie in $G$ does not exceed 
$$\frac{1}{2}+\sqrt{N}.$$
\end{lem}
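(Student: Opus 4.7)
The plan is to leverage Lemma~\ref{disjoint_roots} by producing, from a single trinomial $f$ with $r$ roots in $G$, a collection of $r$ \emph{distinct} members of $C(n,s)$, each of which also has $r$ roots in $G$. Since Lemma~\ref{disjoint_roots} will then force these root sets to overlap only at $\alpha = 1$, a counting argument will bound $r$ in terms of $N$.

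Concretely, let $\alpha_1, \ldots, \alpha_r \in G$ be the distinct roots of $f$ and, for each $i$, form the ``shift''
$$ g_i(x) := f(\alpha_i x) = (a\alpha_i^n)x^n + (b\alpha_i^s)x^s + 1. $$
Each $g_i$ has constant term $1$, has both nonconstant coefficients nonzero (as $a,b,\alpha_i \neq 0$), and vanishes at $x = 1$ (since $f(\alpha_i) = 0$); hence $g_i \in C(n,s)$. Because multiplication by $\alpha_i^{-1}$ is a bijection of $G$, the set of roots of $g_i$ lying in $G$ is exactly $\{\alpha_j/\alpha_i : 1 \leq j \leq r\}$, of cardinality $r$ and containing $1$.

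The one step requiring care is checking that $g_1, \ldots, g_r$ are pairwise distinct. If $g_i = g_j$, comparing the $x^n$ and $x^s$ coefficients gives $(\alpha_i/\alpha_j)^n = (\alpha_i/\alpha_j)^s = 1$; since $\alpha_i/\alpha_j$ lies in the cyclic group $G$ of order $N$, its order divides $\gcd(n,s,N) = 1$, forcing $\alpha_i = \alpha_j$. Granted this, Lemma~\ref{disjoint_roots} implies that the punctured root sets
$$ S_i := \{\alpha_j/\alpha_i : j \neq i\} \subseteq G \setminus \{1\} $$
are pairwise disjoint, each of cardinality $r - 1$. Therefore
$$ N = \#G \;\geq\; 1 + \sum_{i=1}^{r} \#S_i = r^2 - r + 1, $$
and solving this quadratic inequality for $r$ yields $r \leq \tfrac{1}{2} + \sqrt{N - \tfrac{3}{4}} \leq \tfrac{1}{2} + \sqrt{N}$, the claimed bound. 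I do not expect any real obstacle beyond the distinctness verification; the rest is a pigeonhole argument once the shifted family $\{g_i\}$ has been identified as the right object to study.
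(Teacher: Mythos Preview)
Your proof is correct and follows essentially the same approach as the paper: form the shifted trinomials $g_i(x)=f(\alpha_i x)\in C(n,s)$, verify they are distinct using $\gcd(n,s,N)=1$, apply Lemma~\ref{disjoint_roots} to conclude their root sets in $G$ overlap only at $1$, and deduce $r^2-r+1\le N$. Your write-up is slightly more explicit (identifying the root sets as $\{\alpha_j/\alpha_i\}$ and noting the marginally sharper $\sqrt{N-3/4}$), but the argument is the same.
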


\begin{proof}
Suppose $f(x) = ax^{n} + bx^{s} + 1$ has $r$ distinct roots $\zeta_1,\zeta_2, \ldots, \zeta_r$ in $G$.
For each $\zeta_i$ let $g_i(x) = f(\zeta_i x) = (a\zeta_i^n)x^{n} + (b\zeta_i^s)x^{s} + 1$.
Since the map $x \rightarrow \zeta x$ permutes the elements of $G$, each of these $g_i$ also has $r$ roots in $G$.
Additionally, each $g_i$ is a member of $C(n,s)$, since $g_i(1) = f(\zeta_i) = 0$.

We now check that each $g_i$ is distinct.
Suppose $g_i = g_j$ with $i \neq j$. We then have both $\zeta_i^n = \zeta_j^n$ and $\zeta_i^s = \zeta_j^s$, or, equivalently, $(\zeta_i/\zeta_j)^n = 1$ and $(\zeta_i/ \zeta_j)^s = 1$.
Once again, the only $\alpha \in G$ that satisfies $\alpha^n = \alpha^s = 1$ is $\alpha = 1$, so $\zeta_i = \zeta_j$, which contradicts the supposition that the roots $\zeta_1,\zeta_2, \ldots, \zeta_r$ are distinct.

In summary, there exist $r$ distinct trinomials of $C(n,s)$ that each have $r$ roots in $G$, and by Lemma \ref{disjoint_roots}, $\zeta = 1$ is the only root among these that is not unique.
This implies that $G$ contains at least $r(r-1) + 1$ distinct elements, but we know that $G$ has size $N$ by hypothesis.
Therefore it must be that
$$r^2 - r + 1 \leq N,$$ 
which yields the desired constraint on $r$:
$$r\leq \frac{1}{2}+\sqrt{N}.$$
\end{proof}
\noindent
We now have everything we need to complete the proof.

\begin{proof}[Proof of Theorem \ref{upper_bound}]
Let $f(x) = x^n + ax^s + b \in \F_q[x]$.
Obviously the roots of $f$ are not affected by re-scaling; let
$$ \tilde{f}(x) = \frac{1}{b}x^n + \frac{a}{b}x^s + 1 = \alpha x^n + \beta x^s + 1. $$
The exponents may fail to satisfy $\delta = \gcd(n, s, q-1) = 1$.
However, $\tilde{f}(x) = 0$ is equivalent to the system 
$$\alpha y^{n/\delta} + \beta y^{s/\delta} + 1= 0$$ 
$$y = x^{\delta}.$$
The second equation is only solvable for $x$ when $y$ lies in the subgroup of order $(q-1)/\delta$. 
The first equation satisfies $\gcd(n/\delta, s/\delta, (q-1)/\delta) = 1$, so we can invoke Lemma \ref{gcd_one} and find that there are at most $ \Big\lfloor \frac{1}{2} + \sqrt{\frac{q-1}{\delta}} \Big\rfloor$ such $y$. 
Each of these $y$ then admits one coset of $\delta$ distinct solutions for $x$.
\end{proof}

\begin{proof}[Proof of Theorem \ref{june_trinomials}]
Observe that the function 
$$T(x) = x^{p^k} + x$$ 
is an $\ff{p^k}$-linear map from $\ff{p^{2k}}$ to $\ff{p^{2k}}$. 
$T$ is a binomial, so it is easy to show that it does have nonzero solutions and therefore has a null space of positive dimension.
Since $T$ is not the zero transformation, we conclude that it has a null space of dimension 1, and therefore that it has $p^k$ roots.

We see that $f(x) = T(x) - 2 = 0$ exactly when $T(x) = 2$. 
This has one obvious solution, $x=1$, so we conclude from the linearity of $T$ that it has as many solutions as $T(x) = 0$.
Therefore, $f$ has $p^k = \sqrt{q}$ roots, all of which are nonzero.
\end{proof}

\section{Proof of Theorem \ref{poisson_density}}

Our proof of Theorem \ref{poisson_density} relies on the following statement from \cite{ffcheb}, which can be viewed as a Chebotarev density theorem for function fields. At its core, this result is powered by the Lang-Weil estimate for the number of points on varieties over $\F_q$. 

\begin{thm} \label{ffc}
\cite[Proposition 3.1]{ffcheb}
Let $n, m,$ and $N$ be positive integers, and let $F \in \F_q[A_1, \ldots A_m, x]$ be separable in $x$ and have $\deg F \leq N$ and $\deg_x F = n$.
Let $\F$ be an algebraic closure of $\F_q$, and suppose that 
$$\textnormal{Gal}\left(F , \F(A_1,\ldots A_m) \right) \cong S_n.$$
For a partition $\lambda$ of $n$, let $C_\lambda \subset S_n$ denote the conjugacy class of permutations $\sigma \in S_n$ with cycle type $\lambda$,
and let $\mathcal{A}_\lambda$ denote the set of $(a_1, \ldots, a_m) \in \F_q^m$ such that the univariate polynomial $f(x) = F(a_1, \ldots a_m, x)$ factorizes over $\F_q$ into irreducible factors with degree pattern $\lambda$.
Then, there exists a constant $c(m,N) \in \mathbb{R}$, which depends only on $m$ and $N$, such that
$$ \Big\lv \frac{\#\mathcal{A}_\lambda}{q^m} - \frac{\#C_\lambda}{\# S_n}  \Big\rv \leq \frac{c(m,N)}{ q^{1/2}}. $$
\end{thm}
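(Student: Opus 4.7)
The plan is to apply Theorem~\ref{ffc} to $F(A,B,x) := x^n + Ax^s + B \in \F_p[A,B,x]$, treated as a polynomial in $x$ with $m=2$ parameters. The easy hypotheses --- separability of $F$ in $x$ and the bound $\deg F = \deg_x F = n \leq N$ --- are immediate for $p$ large, so the heart of the application is to verify
$$ \textnormal{Gal}\!\left(F , \overline{\F}_p(A,B)\right) \cong S_n. $$
I would do this in three substeps: (i) irreducibility of $F$ in $\overline{\F}_p[A,B,x]$, which is immediate since $F$ is linear in $B$; (ii) producing a transposition in the Galois group, via local monodromy at a generic point of the discriminant curve in the $(A,B)$-plane, where exactly two roots of $F$ coalesce (this curve is parametrized explicitly by $x \mapsto (-nx^{n-s}/s,\, (n-s)x^n/s)$); and (iii) upgrading transitivity-with-a-transposition to $S_n$ by ruling out a nontrivial block decomposition. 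Step (iii) is the main obstacle: I would use $\gcd(n,s) = 1$ and the observation that moving the base point along the discriminant curve yields enough distinct transpositions to generate a transitive subgroup --- any transitive subgroup of $S_n$ generated by transpositions being necessarily $S_n$. This parallels the classical characteristic-zero analysis of Galois groups of generic trinomials.

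Granted the Galois group is $S_n$, Theorem~\ref{ffc} gives that for each partition $\lambda$ of $n$, the proportion of $(a,b) \in \F_p^2$ such that $f(x) = x^n + ax^s + b$ factors over $\F_p$ with degree pattern $\lambda$ converges to $\#C_\lambda/n!$ at rate $O(p^{-1/2})$. Since $R(f)$ equals the number of linear factors of $f$ over $\F_p$, and hence the number of fixed points of the corresponding permutation in $S_n$, summing over partitions with exactly $r$ parts equal to $1$ yields
$$ \lim_{p\to\infty}\frac{\#\{(a,b)\in\F_p^2 : R(f)=r\}}{p^2} \;=\; \frac{\binom{n}{r} D_{n-r}}{n!} \;=\; \frac{D_{n-r}}{r!\,(n-r)!}, $$
where $D_k$ denotes the $k$-th derangement number. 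Using $D_k = [k!/e]$ for $k \geq 1$ and $D_0 = 1$ recovers the displayed two-case formula in the theorem, and passing from $\F_p^2$ to $(\F_p^*)^2$ perturbs each proportion by only $O(1/p)$, negligible in the limit.

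For the second statement, I would write $t_N(p,r)$ as an average over pairs $(n,s) \in \Lambda_{N,p} := \{(n,s) : 0 < s < n \leq N,\; \gcd(n,s,p-1)=1\}$ and reduce each pair to the coprime case. If $d = \gcd(n,s)$, then $\gcd(n,s,p-1) = \gcd(d, p-1) = 1$ makes $x \mapsto x^d$ a bijection of $\F_p^*$; since $b \neq 0$, roots of $x^n + a x^s + b$ in $\F_p$ correspond bijectively (via $y = x^d$) to roots of $y^{n/d} + a y^{s/d} + b$, whose exponents are coprime. The first part of the theorem then yields
$$ \lim_{p\to\infty}\frac{\#\{(a,b)\in(\F_p^*)^2 : R(x^n+ax^s+b)=r\}}{(p-1)^2} \;=\; \frac{D_{n'-r}}{r!\,(n'-r)!} \;\leq\; \frac{1}{r!}, $$
where $n' = n/d$ and the inequality uses the elementary bound $D_k/k! \leq 1$ for all $k \geq 0$ (with equality iff $k = 0$). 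Since this bound is uniform over the finitely many pairs in $\Lambda_{N,p}$, it transfers to the average $t_N(p,r)$; combined with $\mu(p,r) \to e^{-1}/r!$ it gives $t_N(p,r)/\mu(p,r) \leq e + o(1)$ uniformly in $r \in \{0,1,\ldots,N\}$, and taking $\limsup_{p\to\infty}$ of $\max_{r \leq N}$ completes the proof.
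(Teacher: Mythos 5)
Your proposal does not prove the statement in question. The statement is Theorem \ref{ffc} itself --- the Chebotarev-type density theorem for function fields, quoted verbatim from \cite[Proposition 3.1]{ffcheb} --- whereas what you have written is an argument that \emph{assumes} Theorem \ref{ffc} and applies it to the family $x^n + Ax^s + B$ to derive the limiting distribution of root counts. That derivation is the content of Theorem \ref{poisson_density}, a different result, and the paper proves it in Section 3 by essentially the route you sketch: verify the hypotheses of Theorem \ref{ffc} by citing Cohen's theorem that $\mathrm{Gal}\left(x^n+Ax^s+B,\, \overline{\F}_p(A,B)\right) \cong S_n$ when $p \nmid n(n-1)$ and $\gcd(n,s)=1$, count permutations with exactly $r$ fixed points via derangement numbers, and reduce $\gcd(n,s)=d>1$ to the coprime case using the bijection $x \mapsto x^d$ on $\F_p$. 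Your substeps (i)--(iii) amount to a sketched replacement for the citation of \cite{cohen}, and your final paragraph reproduces the paper's averaging argument over the pairs $(n,s)$.

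A proof of Theorem \ref{ffc} itself would have to look entirely different. One must relate the factorization pattern of the specialization $F(a_1,\ldots,a_m,x)$ to the Frobenius conjugacy class in $\mathrm{Gal}(F,\F_q(A_1,\ldots,A_m))$, handle the field-crossing between $\F_q$ and its algebraic closure, and then count the specializations landing in a prescribed class using the Lang--Weil estimate for points on varieties over $\F_q$; that is the source of the error term $O(q^{-1/2})$ and of the constant $c(m,N)$. Nothing in your proposal engages with any of this --- the theorem is used as a black box. A smaller internal issue with your application: to pass from ``exactly $r$ linear factors'' to ``exactly $r$ distinct roots'' you must bound the locus of $(a,b)$ for which $x^n+ax^s+b$ has a repeated root; the paper does this by invoking the $O_N(p)$ bound from the proof of Proposition 3.1 in \cite{ffcheb}, and your proposal omits it.
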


Let $k$ be a field, and let $F(x) = x^n + A x^s + B$, where $A$ and $B$ are indeterminates over $k$, $0 < s < n$, and $\gcd(n,s) = 1$.
It is shown by Cohen in \cite[p.\ 64 and Corollary 3]{cohen} that unless char$(k)$ divides $n(n-1)$, $F$ is separable over $k(A,B)$ and $\textnormal{Gal}\left(F , k(A,B) \right) \cong S_n$. Here we consider $k$ an algebraic closure of a prime field $\F_p$, and $n$ bounded by some fixed $N \in \N$, so $F$ satisfies the conditions of Theorem \ref{ffc} when $p > N$.

Let $C(r)$ be the collection of all permutations $\sigma \in S_n$ with exactly $r$ fixed points. 
If $r = n$ then $C(r)$ contains only the identity permutation and then $ \frac{\#C(r)}{\#S_n} = 1/n! = 1/r! $.
Otherwise, every $\sigma \in C(r)$ can be written as $\sigma = c_1 c_2 \cdots c_r \sigma_d$, where each $c_i$ is a length-one cycle and $\sigma_d$ permutes the remaining elements and has no fixed points.
Permutations that have no fixed points are called \textit{derangements}, and the proportion of permutations that are derangements is extremely well-approximated by $e^{-1}$ \cite{derang}.
Specifically, the number of derangements of $n$ elements is given by $d_n =  \big[ e^{-1} n! \big],$
where $[\cdot]$ denotes the ``nearest integer" function.

Therefore, to count the the number of $\sigma \in C(r)$, we simply count the ways to choose $c_1, c_2, \ldots, c_r$ and multiply by the number of derangements of the remaining $n - r$ elements, so we have
$$
\frac{\#C(r)}{\#S_n} = \frac{{{n}\choose{r}}d_{n-r}}{n!} = \frac{\frac{n!}{r!(n-r)!}d_{n-r}}{n!} = \frac{\big[ e^{-1}(n-r)! \big]}{r!(n-r)!}.
$$
Note that
$$\frac{\big[ e^{-1}(n-r)! \big]}{r!(n-r)!} \leq \frac{ e^{-1}(n-r)! + 0.5}{r!(n-r)!} \leq \frac{ e^{-1}+ 0.5}{r!} < \frac{1}{r!},$$ 
so in fact we have $\frac{\#C(r)}{\#S_n} \leq 1/r!$ always.

Let $\mathcal{A}(r)$ denote the number of $(a,b) \in \F_p^2$ such that $F(a,b,x) = x^n + ax^s + b \in \F_p[x]$ has exactly $r$ linear factors. 
Since $C(r)$ is the union of some number of conjugacy classes which is bounded in terms of $N$, we have
$$ \Big\lv \frac{\#\mathcal{A}(r)}{p^2} - \frac{\#C(r)}{\# S_n} \Big\rv \leq  
\sum_{C_\lambda \subseteq C(r)} \Big\lv \frac{\#\mathcal{A}_\lambda}{p^2} - \frac{\#C_\lambda}{\# S_n}   \Big\rv 
= O_N \left( \frac{1}{ p^{1/2}} \right), $$ 
as $p \to \infty$, where the $O$-constant depends only on $N$. Now define 
$$\mathcal{A}^*(r) = \set{(a,b) \in (\F_p^*)^2}{x^n + a x^s + b \textnormal{ has exactly } r \textnormal{ distinct linear factors}}.$$
$\mathcal{A}^*(r)$ differs negligibly from $\mathcal{A}(r)$ since there are less than $2p$ elements in $\F_p^2 \setminus (\F_p^*)^2$, and by \cite[Proof of Proposition 3.1]{ffcheb}, the number of $(a,b) \in \F_p^2$ such that
$x^n + ax^s + b$ has a root of multiplicity is bounded asymptotically by $O_N(p)$, so
$$
\Big\lv \frac{ \#\mathcal{A}^*(r)}{ p^2}  - \frac{ \#\mathcal{A}(r)}{ p^2}   \Big\rv  
 = O_N\left( \frac{1}{p} \right) .
$$
Finally, note that 
$$ \Big\lv \frac{ \#\mathcal{A}^*(r)}{ (p-1)^2}  - \frac{ \#\mathcal{A}^*(r)}{ p^2}   \Big\rv  \leq 1 -  \frac{(p-1)^2}{p^2} < \frac{2}{p}.$$
Therefore we have 
$$\Big\lv \frac{\#\mathcal{A}^*(r)}{\#(\F_p^*)^2}  -  \frac{\#C(r)}{\#S_n}  \Big\rv = O_N \left(\frac{1}{p^{1/2}}\right), $$
which proves the first claim, concerning trinomials with $\gcd(n,s) = 1$.\\

Recall the definitions
\begin{align*}
M_p &= \set{f \in \F_p[x]}{ \deg f < p}  \\
T_{p,N} &= \set{x^n + ax^s + b}{(a,b) \in (\F_p^*)^2, \; 0 < s < n \leq N, \; \textnormal{and } \gcd(n,s,p-1) = 1},
\end{align*}
and recall that $\mu(p,r)$ denotes the proportion of $f \in M_p$ with $R(f) = r$, and  that $t_N(p,r)$ denotes the proportion of $f \in T_{p,N}$  with $R(f) = r$. It is clear that $t_N(p,r)$ is equal to the average value across all fractions
$$ \frac{\#\mathcal{A}^*(r)}{\#(\F_p^*)^2} $$
which are associated to a trinomial $x^n + Ax^s + B$ with $0 < s < n \leq N$ and $\gcd(n,s,p-1)= 1$.
It remains to study trinomials with $\gcd(n,s,p-1) = 1$ but $\gcd(n,s) > 1$.

\pagebreak

Suppose $k = \gcd(n,s) > 1$, and write $n = kn'$ and $s = ks'$ so that $\gcd(n',s') = 1$.
If $\gcd(n,s,p-1) = 1$, then we must have $\gcd(k,p-1) = 1$, so the map $x \rightarrow x^k$ permutes $\F_p$. \linebreak
Since $x^n = (x^k)^{n'}$ and $x^s = (x^k)^{s'}$, it follows that the trinomials $x^n + ax^s + b$ and $x^{n'} + ax^{s'} + b$
have the same number of distinct roots in $\F_p$. Thus, $t_N(p,r)$ is equal to the average of a collection of fractions which all satisfy
$$ \frac{\#\mathcal{A}^*(r)}{\#(\F_p^*)^2} \leq \frac{1}{r!} + \frac{C_N}{p^{1/2}},$$
where $C_N \in \mathbb{R}$ is a constant which depends only on $N$. It follows immediately that

$$ \limsup_{p \to \infty} t_N(p,r) \leq 1/r! $$
for each $r \leq N$, and so
$$ \limsup_{p \to \infty} \left(  \max_{r \leq N} t_N(p,r) r! \right) \leq 1 .$$

In \cite{random}, Leont'ev studies the generating function $\phi(x) = \sum_{r=0}^\infty \mu(p,r) x^r$, and shows that $\phi(x)$ converges to $e^{x-1}$ for $x \in (0,1]$.
Using the continuity theorem for generating functions \cite[Section 1.1.6]{cont_thm}, he then concludes that $\mu(p,r) \to e^{-1}/r!$ as $p \to \infty$ for all $r \in \N$.
Since we are only interested in the finitely many $r \in \{0,1,\ldots,N\}$, we can also be assured that
$$ \lim_{p \to \infty} \left( \min_{r \leq N} \mu(p,r) r! \right) = e^{-1}. $$
Therefore, we have
\begin{align*}
\limsup_{p \to \infty} \left(  \max_{r \leq N} \frac{t_N(p,r)}{\mu(p,r)} \right) &= 
\limsup_{p \to \infty} \left(  \max_{r \leq N} \frac{t_N(p,r) }{\mu(p,r) } \frac{r!}{r!} \right) \\
&\leq \limsup_{p \to \infty} \left( \frac{   \max_{r \leq N}  t_N(p,r)r! }{  \min_{r \leq N} \mu(p,r)r! } \right) \\
&= \frac{\limsup_{p \to \infty} \left(  \max_{r \leq N} t_N(p,r) r! \right)}{\lim_{p \to \infty} \left( \min_{r \leq N} \mu(p,r) r! \right)} \\
&\leq \frac{1}{e^{-1}} \\
&= e.
\end{align*}
\qed

\section{Poisson Heuristic and Computational Data for $\F_p$}

First, we attempt to establish some basic plausibility for the Poisson Heuristic.
As before, let $t(p,r)$ denote the proportion of trinomials over $\F_p$ with $\delta = 1$ that have $r$ distinct roots. 
The following table gives the statistical distance between $t$ and a Poisson distribution with mean $1$ for a few fields of various sizes.
\\
\begin{center}
\begin{tabular}{l|c}
	 $\F_p$ & $\sum_{r = 0}^{\infty} \lvert t(p,r) - e^{-1}/r! \rvert$   \\ \hline
	$\F_{101}$ & 0.0367266    \\ 
	$\F_{1009}$ & 0.0112061  \\ 
	$\F_{10007}$ & 0.0007107   \\ 
	$\F_{100003}$ & 0.0000834   \\ 
\end{tabular}
\captionof{table}{Deviation of $t(p,r)$ from a Poisson distribution.} 
\end{center}
\vspace{0.5cm}

Recall that $T_p$ denotes the set of trinomials over $\F_p$ with $\delta = 1$ and degree less than $p - 1$.
We have computed $R_p$, the maximum number of roots of attained by any $f \in T_p$, for primes up to $p = 139571$.
In this section, we show that the values of $R_p$ that we would expect by Heuristic \ref{behave} are quite close to what we actually observe.
That is, we consider the expected values of
$$R_p = \max \set{R(f)}{f \in T_p}$$ 
under the model that the values of $R(f)$ are given by random variables with distribution function $\rho(r) = {e^{-1}/r!}$,
and we compare these expected values with real values of $R_p$.

More generally, let $M_N$ be the maximum of $N$ independent variables all with distribution $\rho(r) = {e^{-1}/r!}$. 
It is known that $M_N$ becomes very predictable when $N$ is large.
Specifically, it is shown in \cite{poisson_exist} that there exists an integer sequence $\widehat{M}_N$ such that, as $N \to \infty$,
$$\textnormal{Prob}(\lvert \widehat{M}_N - M_N \rvert \leq 1) \to 1.$$
In \cite{poisson_weak}, a nice asymptotic formula is given for $\widehat{M}_N$:
$$\widehat{M}_N \sim \frac{\log{N}}{\log{\log{N}}}. $$

As an initial estimate, there are slightly less than $p^4$  trinomials $x^n + ax^s + b \in T_p$: there are $(p-1)^2$ pairs $(a, b)$ and almost $(p-1)^2$ pairs $(n,s)$. So, assuming Heuristic \ref{behave}, a reasonable conservative prediction would be
$$ R_p \approx \frac{4 \log{p}}{\log{\log{p}}}. $$

However, to make an accurate prediction for $R_p$ we need to be more precise in two ways.
Firstly, there are actually much fewer independent values of $R(f)$ than $p^4$.
For any $f \in \F_p[x]$, we have that 
$$R(f(x)) = R( f(\gamma x^e)),$$ 
as long as $\gcd(e, p - 1) = 1$ and $\gamma \in \F_p^*$,
because the maps $x \rightarrow \gamma x$ and $x \rightarrow x^e$ are both bijections on $\F_p$.
As a result, knowing the number of roots of one trinomial immediately determines the number of roots of a significant chunk of trinomials.
Therefore, we would like to find an appropriate, effective value for $N$ that better models the number of independent random values.
To do this, we count exactly the number of trinomials with $\delta = 1$ and then quotient out by the size of these equivalent chunks.

The exact number of pairs $(n, s)$ that are relatively prime with $p - 1$ is given by the \textit{Jordan totient function}, $J_2(p - 1)$ \cite[p.\ 147]{jordan}. 
We must subtract $\varphi(p-1)$ to avoid counting pairs with $n = s$, and we divide by $2$ to avoid counting both $(n,s)$ and $(s,n)$.
There are $(p-1)^2$ choices for the two coefficients, so overall we have $$\#T_p = \left( 1/2 \right) \left(p-1\right)^2 \left(J_2(p - 1) - \varphi(p - 1) \right) .$$
As discussed in Section 2, $\gcd(n, s, p - 1) = 1$ implies that every pair $(\gamma^n, \gamma^s)$ is unique, so we divide by $(p - 1)$ to account for trinomials of the form $f(\gamma x)$.
To account for the transformation $x \rightarrow x^e$, we divide by the number of $e$ with $\gcd(e,p-1) = 1$, which is given by $\varphi(p - 1)$.
So, we take our effective number of independent Poisson variables to be
$$ N(p) = \left(\frac{p-1}{2} \right)  \left( \frac{J_2(p-1)}{\varphi(p - 1)} - 1 \right).$$
This number is approximately equal to $p^2$; for primes in the range $11 \leq p \leq 139571$, we have
$$ \frac{1}{2} < \frac{N(p)}{p^2} < 2.$$

Secondly, it is beneficial to consider the less elegant but more precise asymptotic formula for $\widehat{M}_N$ given in \cite{poisson_strong}.
Below, $W$ is the \textit{Lambert W function}.
$$
\widehat{M}_N \sim E_N :=   \frac{\log N}{W(\log (N) / e)} - \frac{1 + \log 2\pi }{2 \log \left( \frac{\log N}{W(\log (N) / e)}\right) } - 1.5.\\
$$

In summary, by Heuristic \ref{behave} we expect that
$ R_p  \approx E_{N(p)} $
when $p$ is sufficiently large.
The following plot displays the ratios $R_p / E_{N(p)}$ for all primes $p \leq 139571$.

\noindent
\includegraphics[width=\textwidth]{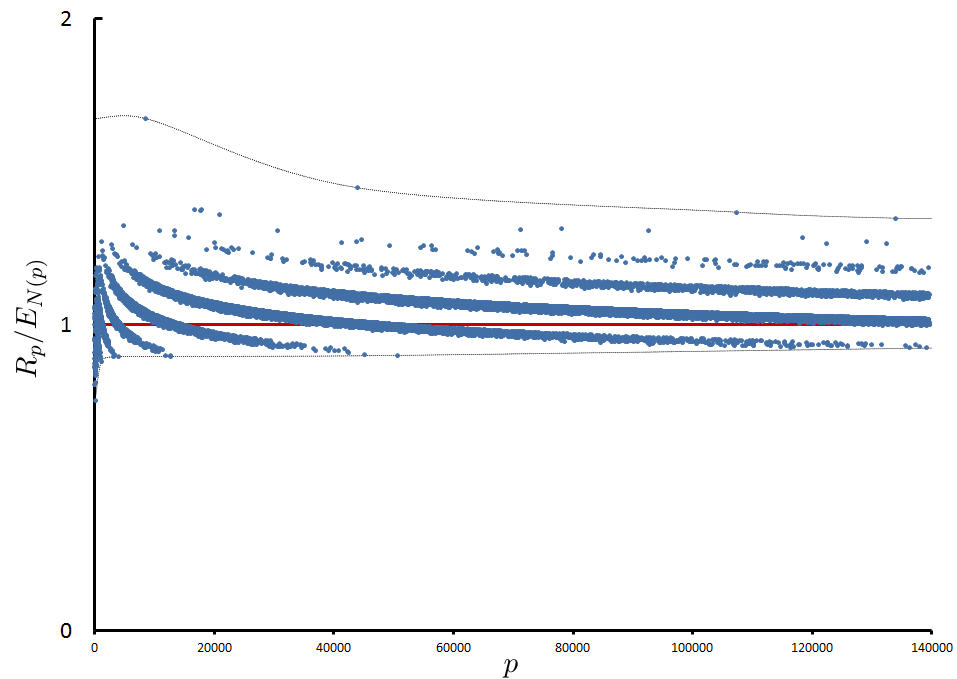}
\captionof{figure}{The ratios $R_p / E_{N(p)}$ for all primes $p \leq 139571$.}

\vspace{0.5cm}

The visibly distinct bands correspond to primes that share the same value for $R_p$. 
The apparent upper and lower bounding monotonic subsequences are traced by dotted curves.
The average over all ratios is 1.0429 and the standard deviation is 0.05587.
For all $p \leq 139571$, we have $R_p \leq 2 \log p$.
The largest recorded value of $R_p$ is $R_p = 16$, which is witnessed at $p = 8581, 43943, 107351,$ and $133877$; the associated ratios $16/E_{N(p)}$ lie visibly on the upper dotted line.

The values of $R_p = \max \set{R(f)}{f \in T_p}$ were computed in a straightforward way (i.e.\ by enumerating trinomials and counting their roots) 
by parallel C++ code which ran on Texas A\&M's Ada supercomputing cluster for $5000$ CPU hours.
The program takes advantage of the fact that $R(f(x)) = R(f(\gamma x^e))$ when $\gcd(e,p-1) = 1$ and $\gamma \in \F_p^*$ to reduce the enumeration space.
The values $E_{N(p)}$ were computed separately by a small Matlab program, which in particular makes use of Matlab's built-in \verb|lambertw| function.


\section*{Acknowledgments}
We would like to thank the Texas A\&M Supercomputing Facility for providing us 
with computational resources, and our advisor,  J.\ Maurice Rojas, for his 
indispensable guidance.


\begin{thebibliography}{99}

 \bibitem{poisson_exist} Anderson, C.W., 1970. Extreme value theory for a class of discrete distributions with applications to some stochastic processes. \textit{Journal of Applied Probability}, pp. 99-113.
 \bibitem{ffcheb} Bank, E., Bary-Soroker, L. and Rosenzweig, L., 2015. Prime polynomials in short intervals and in arithmetic progressions. \textit{Duke Mathematical Journal, 164}(2), pp.277-295.
 \bibitem{rojas1} Bi, J., Cheng, Q. and Rojas, J.M., 2013, June. Sub-linear root detection, and new hardness results, for sparse polynomials over finite fields. \textit{Proceedings of the 38th International Symposium on Symbolic and Algebraic Computation} (pp. 61-68). ACM.
 \bibitem{poisson_strong} Briggs, K.M., Song, L. and Prellberg, T., 2009. A note on the distribution of the maximum of a set of Poisson random variables. \textit{arXiv preprint arXiv:0903.4373}.
\bibitem{canetti} Canetti, R., Friedlander, J., Konyagin, S., Larsen, M., Lieman, D. and Shparlinski, I., 2000. On the statistical properties of Diffie-Hellman distributions. \textit{Israel Journal of Mathematics, 120}, pp.23-46.
 \bibitem{rojas2} Cheng, Q., Gao, S., Rojas, J.M. and Wan, D., 2014. Sparse univariate polynomials with many roots over finite fields. \textit{arXiv preprint arXiv:1411.6346}.
\bibitem{cohen}  Cohen, S., 1980. The Galois group of a polynomial with two indeterminate coefficients. \textit{Pacific Journal of Mathematics, 90}(1), pp.63-76.
 \bibitem{jordan} Dickson, L.E., 1919. \textit{History of the theory of numbers, Vol.\ I}, Carnegie Institution, Washington DC.
\bibitem{sparse_polys} Kelley, Z., 2016. Roots of sparse polynomials over a finite field. \textit{arXiv preprint arXiv:1602.00208}.
 \bibitem{poisson_weak} Kimber, A.C., 1983. A note on Poisson maxima. \textit{Probability Theory and Related Fields, 63}(4), pp.551-552.
 \bibitem{derang} Hassani, M., 2003. Derangements and applications. \textit{Journal of Integer Sequences, 6}(2), p.3.
 \bibitem{random} Leont’ev, V.K., 2006. Roots of random polynomials over a finite field. \textit{Mathematical Notes, 80}(1), pp.300-304.
\bibitem{cont_thm} Sachkov, V.N., 1997. \textit{Probabilistic methods in combinatorial analysis}. Cambridge University Press, Cambridge. 

\end{thebibliography}
\end{document}